\DeclareMathOperator{\NN}{\mathbb N}
\DeclareMathOperator{\C}{\mathbb C}
\newcommand{\E}{\mathcal E}
\renewcommand{\C}{\mathcal C}
\newcommand{\Ic}{{\rm Incr}(\NN)}
\newcommand{\eps}{\varepsilon}
\newcommand{\ca}[1]{{\mathcal #1}}
\theoremstyle{plain}
\newtheorem{theorem}{Theorem}
\newtheorem{lemma}[theorem]{Lemma}
\newtheorem{proposition}[theorem]{Proposition}
\newtheorem{corollary}[theorem]{Corollary}
\newtheorem{conjecture}[theorem]{Conjecture}
\newtheorem{question}[theorem]{Question}
\newtheorem{fact}[theorem]{Fact}
\theoremstyle{definition}
\newtheorem{remark}[theorem]{Remark}
\newtheorem{definition}[theorem]{Definition}
\newtheorem{example}[theorem]{Example}
\newtheorem{exercise}[theorem]{Exercise}
\newtheorem{problem}{Problem}
\numberwithin{theorem}{section}
\numberwithin{equation}{section}
\newcommand{\bt}{\begin{theorem}}
\newcommand{\et}{\end{theorem}}
\newcommand{\bl}{\begin{lemma}}
\newcommand{\el}{\end{lemma}}
\newcommand{\bd}{\begin{definition}}
\newcommand{\ed}{\end{definition}}
\newcommand{\beq}{\begin{equation}}
\newcommand{\eeq}{\end{equation}}
\newcommand{\bexa}{\begin{example}}
\newcommand{\eexa}{\end{example}}
\newcommand{\bexe}{\begin{exercise}}
\newcommand{\eexe}{\end{exercise}}
\newcommand{\bfact}{\begin{fact}}
\newcommand{\efact}{\end{fact}}
\newcommand{\bprop}{\begin{proposition}}
\newcommand{\eprop}{\end{proposition}}
\newcommand{\bp}{\begin{proof}}
\newcommand{\ep}{\end{proof}}
\newcommand{\bc}{\begin{corollary}}
\newcommand{\ec}{\end{corollary}}
\newcommand{\bq}{\begin{question}}
\newcommand{\eq}{\end{question}}
\newcommand{\bconj}{\begin{conjecture}}
\newcommand{\econj}{\end{conjecture}}
\newcommand\Om{\Omega}
\newcommand{\br}{\begin{remark}}
\newcommand{\er}{\end{remark}}
\newcommand{\bproblem}{\begin{problem}}
\newcommand{\eproblem}{\end{problem}}
\newcommand{\ben}{\begin{enumerate}}
\newcommand{\een}{\end{enumerate}}
\renewcommand{\phi}{\varphi}
\newcommand{\si}{\mathcal S}
\newcommand{\forae}{{\widetilde\forall}}
\begin{document}

\title{Monotone paths in random hypergraphs} 

\author{{\bf P. Majer}
        \thanks{Dipartimento di Matematica, Universit\`a di Pisa, 
        Largo B. Pontecorvo 5, 56127 Pisa, Italy, 
        e-mail: majer@dm.unipi.it},
        \ {\bf M. Novaga}
        \thanks{Dipartimento di Matematica, Universit\`a di Padova,
        via Trieste 63, 35121 Padova, Italy, e-mail: novaga@dm.unipi.it}}

\date{} 

\maketitle

\begin{abstract}
\noindent We determine the probability thresholds for the existence of monotone paths,
of finite and infinite length, in random oriented graphs with vertex set $\NN^{[k]}$,
the set of all increasing $k$-tuples in $\NN$.
These graphs appear as line graph of uniform hypergraphs with vertex set $\NN$.
\end{abstract}

\section{Introduction}

In this paper we are interested in oriented graphs with vertex set $\NN^{[k]}$, the set 
of all increasing $k$-tuples of elements of $\NN$.
We recall that an oriented graph $G$ is a pair of sets $(V_G,E_G)$, where $V_G$ is the set of vertices  
and $E_G\subseteq V_G\times V_G$ is the set of edges of $G$, such that $(a,b)\in E_G$ implies 
$(b,a)\not\in E_G$ for all $a,b\in V_G$. 

We point out that such graphs naturally arise as line graphs of
$k$-uniform hypergraphs with vertex set $\NN$ \cite{VV:09}. 
Indeed, every $k$-edge of such a hypergraph 
is an element of $\NN^{[k]}$, and we can uniquely associate to the
hypergraph the line graph with vertex set $\NN^{[k]}$, where two vertices are linked if the two
corresponding hyperedges have intersection of a prescribed type. 

A random subgraph of $G$ is a random choice of its vertices:
more precisely, we associate to each vertex $v\in V_G=\NN^{[k]}$ a measurable set $X_{v}\subseteq \Om$,
where $(\Om, \mu)$ is a given probability space. We recall that $G$
can be equivalently defined by means of a $\mu$-measurable function 
$X: \Om \to 2^{V_G}$, such that $X_v := \{\omega\in \Om:\, v \in X(\omega)\}$
for all $v\in V_G$. 

Notice that we consider graphs with random vertices rather than random edges, 
as this class of graphs seems more natural for the 
questions we address in this paper. However, for the sake of completeness,
in Section \ref{randomed} we briefly discuss the case of random edges.

The main problem we study in this paper is whether 
a random subgraph $X(\omega)$ of $G$ contains an infinite path for some $\omega\in \Om$.
\bproblem \label{problemv} 
For all $v \in V_G=\NN^{[k]}$ let $X_{v}$ be a measurable subset 
of a probability space $(\Om, \mu)$, with $\mu(X_{v})\ge \lambda\in [0,1]$.
We ask for which values of $\lambda$
there exists an infinite sequence of vertices $v_i$ of $G$ 
such that $(v_i,v_{i+1})$ is an edge of $G$ for all $i\in\NN$ 
and $\bigcap_{i\in \NN} X_{v_i}$ is non-empty.
\eproblem

In order to answer to Problem \ref{problemv},
we have to compute the threshold 
\begin{equation}\label{eqlac}
\lambda_G:=\sup\left\{\inf_{v\in V_G=\NN^{[k]}}\mu(X_{v}):\ X \ \textrm{random graph without infinite paths}\right\}
\end{equation}
and we also want to estimate the probability 
\[
\mu(\{ \omega\in \Om: X(\omega)\textrm{ contains an infinite path}\})
\]  
that the random graph $X$ contains an infinite path, in terms of the parameter $\lambda$. 

\smallskip

Problem \ref{problemv} was first posed in \cite{EH:64} for the graphs $G_k$ defined in Section \ref{seccong}, that is, 
$V_{G_k}=\NN^{[k]}$ and $(v_1,v_2)\in E_{G_k}$, with $v_1=(i_0,\ldots,i_{k-1})$ and $v_2=(j_0,\ldots,j_{k-1})$, if and only if
$j_\ell=i_{\ell+1}$ for all $\ell\in \{0,\ldots,k-2\}$. 
A complete solution of Problem \ref{problemv} for $k=2$ ($k=1$ being trivial) was given in \cite{FT:85}, 
where the authors show that an infinite path necessarily exists if $\lambda\ge 1/2$, while there are counterexamples for $\lambda<1/2$.
In the paper \cite{BMN:12} we present a different proof of this result, relying on the theory of exchangeable measures 
introduced by De Finetti in \cite{DF:30} and later developed by many other authors (see \cite{A:08} and references therein).

\smallskip

In this paper, following the approach of \cite{BMN:12}, we provide a solution  
to Problem \ref{problemv} for a class of graphs which includes 
the graphs $G_k$ for all $k\in\NN$ (see Theorem \ref{teoromitov}). 

An important notion in this paper is the  
{\it joint contractability} of measures (see Section \ref{secconm} and \cite{K:92, K:05}), which extends the notion of exchangeability 
by demanding a smaller class of symmetries.
For such measures there is a remarkable representation theorem (Theorem \ref{teoah})
due to Kallenberg \cite{K:92}, which is crucial for the present work.
Using this representation, we are able to reformulate Problem \ref{problemv} as a variational problem 
on the unit cube, which we then solve by combinatorial methods (see Lemma \ref{lemvar}).

\smallskip

As discussed in detail in Section \ref{secphi},
if a random subgraph $X$ of $G$ has no infinite paths, one can define a 
$\mu$-measurable map $\varphi \colon \Omega \to \omega_1^{\NN^{[k]}}$ where $\varphi(\omega,v)$ is the rank of the vertex $v\in V_G=\NN^{[k]}$ 
in the graph $X(\omega)$. 
It turns out that $\phi_*(\mu)$ is a compactly supported Borel measure on $\omega_1^{\NN}$,
and that 
$$
\phi(X_v)\subseteq 
A_v:=\left\{ x\in \omega_1^{\NN^{[k]}}:\, x_v>x_{v'}\, {\rm for\ all}\, v'\, {\rm such\, that}\,(v,v')\in E_G\right\}.
$$ 
As a consequence, in the determination of the threshold $\lambda_G$,
we can set $\Omega=\omega_1^{\NN^{[k]}}$, $X_{v}=A_{v}$, and reduce to 
a variational problem on the convex set ${\mathcal M}^1_c(\omega_1^{\NN^{[k]}})$ of compactly supported probability 
measures on $\omega_1^{\NN^{[k]}}$:
\begin{equation}\label{probvar}
\lambda_G = \sup_{m\in {\mathcal M}^1_c(\omega_1^{\NN^{[k]}})}\inf_{v\in \NN^{[k]}}\,m\left( A_{v}\right).
\end{equation}
As a next step, we show that in \eqref{probvar} we can equivalently take the supremum in the smaller class 
of all the compactly supported \emph{contractable measures} on $\omega_1^{\NN^{[k]}}$.
Thanks to this reduction, we can explicitly compute $\lambda_G$ when $G$ is a simple contractible graph (see Theorem \ref{teoromitov}).

\smallskip

A closely related problem, which we discuss at the end of the paper, is finding the thresholds 
for paths of finite length. We observe that this type of questions 
are reminiscent of Tur\'an's problem \cite{DC:83}, 
and the answer is expected to be difficult in general.
We mention that the case 
of paths of length $2$ in the graphs $G_k$ mentioned above
has been already considered in \cite{TW:98}, 
where the authors determine the precise thresholds for $k\in \{ 2,3\}$, 
show upper and lower bounds for all $k\in\NN$, and make a conjecture for $k\in \{ 4,5\}$. 
In Section \ref{secopen}, we determine the thresholds for all $k$ odd, 
thus confirming the conjecture made in \cite{TW:98} for $k=5$, while 
the case of $k$ even remains open. 

\section{Notation}\label{secnot}

Given $p\in \NN$, we identify $p$ with the set $\{0,\ldots, p-1\}$. 
Given a topological space $S$ and $k\in\NN$, we let $S^{[k]}$ be the set 
of all subsets of $S$ of cardinality $k$, endowed with the product topology, and we let 
$S^{[\le k]}$ be the set of all subsets of $S$ of cardinality at most $k$.
Notice that, if $S$ has cardinality $k\in\NN$, then $S^{[\le k]}=S^{[\le j]}$ for all $j>k$ and
$S^{[\le k]}=\mathcal P(S)$ is the part set of $S$. 
If $S$ is ordered, we can identify $S^{[k]}$ with the set of 
$k$-tuples $(i_0, \ldots, i_{k-1})$ with $i_0<\cdots < i_{k-1}$. 

Given a compact metric space $\Lambda$,
we let $\Lambda^{\NN^{[k]}}$ be the space of all sequences with indices in $\NN^{[k]}$ and values in $\Lambda$, 
endowed with the product topology, and 
$\mathcal M(\Lambda^{\NN^{[k]}})$ be the space of all Radon measures on $\Lambda^{\NN^{[k]}}$, 
endowed with the weak$^*$ topology. 
The subset 
$\mathcal M_1(\Lambda^{\NN^{[k]}})\subset \mathcal M(\Lambda^{\NN^{[k]}})$ denotes the 
(compact metrizable) space of all probability measures on $\Lambda^{\NN^{[k]}}$.

We denote by $\omega_1$ the first uncountable ordinal
and by $\mathcal M_{\rm c}(\omega_1^{\NN^{[k]}})$ the set of all probability measures on $\omega_1^{\NN^{[k]}}$ 
with compact support, that is, with support in $\alpha^{\NN^{[k]}}$ for some $\alpha<\omega_1$.

We let $\Ic\subset \NN^{\NN}$ be the family of all maps $\sigma: \NN\to \NN$
which are strictly increasing, 
and we let $s:\NN\to\NN$ be the shift map defined as $s(i)=i+1$ for all $i\in\NN$.

As a general rule, given a map $f$ we use the symbol $f_*$ (resp. $f^*$) to indicate a covariant (resp. contravariant) 
map induced by $f$. Due to possible iteration of the previous rule, we still use the symbol $*$ to sum-up a certain number of $*$'s,
keeping the previous convention in order to distinguish a covariant from a contravariant action.
In particular, given a map $\sigma: \NN\to \NN$ we let 
$\sigma_*: \NN^{[k]}\to \NN^{[k]}$ be defined as 
$\sigma_*\left(i_0,\ldots,i_{k-1}\right) = \left(\sigma(i_0),\ldots,\sigma(i_{k-1}) \right)$, 
we let $\sigma^*: \Lambda^{\NN^{[k]}}\to \Lambda^{\NN^{[k]}}$
be defined as $\sigma^*(x)_i=x_{\sigma_*(i)}$ for all $i\in\NN^{[k]}$, 
and we let $\sigma^*:  \mathcal M(\Lambda^{\NN^{[k]}})\to \mathcal M(\Lambda^{\NN^{[k]}})$ 
be the corresponding pushforward map.

\section{Contractible measures and graphs}\label{secconm}

\subsection{Contractible measures}

Let $k\in\NN$, let $\Lambda$ be a compact metric space, and let $m$ be a Borel probability measure on 
$\Lambda^{\NN^{[k]}}$. We say that $m$ is \emph{(jointly) contractible} if $m=\sigma^* (m)$ for all $\sigma\in\Ic$.

Noice that, when $k=1$, the notion of contractible measure reduces to the well-known notion of 
exchangeable measure, introduced by De Finetti in \cite{DF:30}.

Given $k\in\NN$ and $f:[0,1]^{k^{[\le k]}}\to \Lambda$, we let 
$c: \NN^{[k]}\times k^{[\le k]}\to \NN^{[\le k]}$ be the composition function,
and $\hat f:[0,1]^{\NN^{[\le k]}}\to \Lambda^{\NN^{[k]}}$ 
be defined as the composition of $f_*$ and $c^*$, that is
\[
\hat f= f_*\circ\, c^*,
\]
where $f_*:\big( [0,1]^{k^{[\le k]}}\big)^{\NN^{[k]}}\to \Lambda^{\NN^{[k]}}$, 
$c^*:[0,1]^{\NN^{[\le k]}}\to [0,1]^{\NN^{[k]}\times k^{[\le k]}}$.
Notice that $\hat f$ induces a map $\hat f_*: \mathcal M_1([0,1]^{\NN^{[\le k]}})
\to \mathcal M_1(\Lambda^{\NN^{[k]}})$.

Contractible measures admit the following representation \cite[Theorem 7.15]{K:05}:

\bt \label{teoah} 
Let $\Lambda$ be a compact subset of $[0,1]$ 
and let $m$ be a contractible measure on $\Lambda^{\NN^{[k]}}$.
Then, there exists a measurable function $f:[0,1]^{k^{[\le k]}}\to \Lambda$ such that 
$m=\hat f_*(\mathcal L)$, where $\mathcal L$ is the product Lebesgue measure on $[0,1]^{\NN^{[\le k]}}$.
\et

We observe that Theorem \ref{teoah}, in the particular case $k=1$, implies 
the representation result for exchangeable measures due to De Finetti \cite{DF:30} (see also \cite{HS:55}).

\subsection{Asymptotically contractible measures}

We say that a measure $m\in \mathcal M_1(\Lambda^{\NN^{[k]}})$ is \emph{asymptotically contractible} 
if there exists a contractible measure $m'$ such that there exists the limit
\[
m' = \lim_{\underset{\theta \in \Ic} {\min \theta \to \infty}} \theta^*(m).
\]
The following result can be proved as in \cite[Theorem B.8]{BMN:12}.

\bprop\label{propasc}
Given $m\in \mathcal M_1(\Lambda^{\NN^{[k]}})$ there exists $\sigma\in\Ic$ such that 
the measure $\sigma^*(m)$ is asymptotically contractible.
\eprop

\begin{proof}
Fix $m\in {\ca M}_1(\Lambda^{\NN^{[k]}})$. Given $r \in \NN$ consider the function 
$f\colon {\NN}^{[r]}\to {\ca M}_1(\Lambda^{r^{[k]}})$ 
sending $\iota\in {\NN}^{[r]}$ to $\iota^*(m)\in {\ca M}_1(\Lambda^{r^{[k]}})$. 
By Lemma \ref{lempie} below, applied with $M={\ca M}_1(\Lambda^{r^{[k]}})$, 
there is an infinite set $J_r \subset \NN$ such that
\begin{equation} \label{limit}
\lim_{\underset{\iota \in J^{[r]}_r}{\min(\iota) \to \infty}} \iota^*(m)
\end{equation}
exists in ${\ca M}^1(\Lambda^{r^{[k]}})$. By a diagonal argument we choose the same set $J = J_r$ for all $r$. 
Letting $\sigma \in \Ic$ be such that $\sigma (\NN) = J$, we claim that $\sigma^*(m)$ is asymptotically exchangeable. 
To this aim consider $m_k := (s^*)^k \sigma^*(m) \in {\ca M}_1(\Lambda^{\NN^{[k]}})$. 
By compactness there exists an accumulation point $m'\in {\ca M}_1(\Lambda^{\NN^{[k]}})$ of 
the sequence $\{m_k\}_{k\in \NN}$, and we claim that
\begin{equation} \label{mprime}
\lim_{\underset{\theta \in \Ic}{\min(\theta) \to \infty}} \theta^*\sigma^*(m) = m'\, .
\end{equation}
%
Notice that the claim also implies that $m'$ is contractible. 
Indeed, given an increasing function $\gamma \colon \NN \to \NN$, to show $\gamma^*(m') = m'$ 
it is enough to replace $\theta$ with $\theta \circ \gamma$ in \eqref{mprime}. 
Since the subset of $C(\Lambda^{\NN^{[k]}})$ consising of the functions depending 
on finitely many coordinates is dense in $C(\Lambda^{\NN^{[k]}})$, 
it suffices to prove that the limit
\begin{equation} \label{proj}
\lim_{\underset{\theta \in \Ic}{\min(\theta) \to \infty}} \iota^* \theta^* \sigma^*(m) 
\end{equation}
exists in ${\ca M}_1(\Lambda^{r^{[k]}})$ for all $r\in \NN$ and $\iota \in {\NN}^{[r]}$, but this is just a special case of \eqref{limit}.
\end{proof}

We conclude this section by stating 
a topological version of Ramsey Theorem \cite[Lemma A.1]{BMN:12}.

\bl \label{lempie} Let $M$ be a compact metric space, let $k\in \NN$, 
and let $f:  {\NN}^{[k]}\to M$.  
Then there exists an infinite set $J\subset \NN$ such that there exists the limit
\[
\lim_{\underset{(i_1, \ldots, i_k) \in J^{[k]} } {(i_1, \ldots, i_k) \to +\infty} } f(i_1, \ldots , i_k) .
\]
\el

\subsection{Contractible graphs}\label{seccong}

We now introduce the class of ambient graphs in which we consider Problem \ref{problemv}.
\bd \label{defcg}
Let $G$ be an oriented graph with $V_G=\NN^{[k]}$ for some $k\in\NN$.
We say that $G$ is \emph{contractible} if 
\begin{enumerate}
\item $(a,b)\in E_G\ \Rightarrow\ a_i<b_i$ for all $0\le i\le k-1$;
\item $(a,b)\in E_G\ \Rightarrow\ 
(\sigma_*(a),\sigma_*(b))\in E_G$ for all $\sigma\in\Ic$. 
\end{enumerate}
\ed

We say that a map $\tau:S_\tau\to \NN$ is an 
\emph{order relation} of length $k\in\NN$ if $S_\tau = \{a_0,\ldots,a_{k-1}\}\subset\NN$, with 
$a_0<\cdots <a_{k-1}$, $\tau$ is strictly increasing, and 
$\tau(a_i)>a_i$ for all $0\le i\le k-1$.
We say that two order relations $\tau,\tau^\prime$ are equivalent 
if there exist $\sigma,\sigma^\prime\in\Ic$ such that 
\[
\sigma(a_i) = \sigma^\prime(a^\prime_i)
\quad {\rm and}\quad 
\sigma\circ \tau(a_i) = \sigma^\prime\circ\tau^\prime(a^\prime_i) \quad {\rm for\ all\ }0\le i\le k-1.
\]
We denote by $\E_k$ the set of all equivalence classes of the order relations of length $k$.
Notice that, since the set $\E_k$ is finite, we can assume that all the relations $\tau\in\E_k$ are
defined on the same domain $S\subset\NN$, i.e. $S_\tau=S_{\tau^\prime}=S$ for all $\tau,\tau^\prime\in\E_k$.

We say that the couple $(v,v^\prime)\in \NN^{[k]}\times \NN^{[k]}$ satisfies the order relation $\tau$ 
if there exists $\sigma\in\Ic$ such that $v_i = \sigma(a_i)$ 
and $v^\prime_i=\sigma\circ\tau(a_i)$ for all $0\le i\le k-1$. Notice that $(v,v^\prime)$
and $(v^\prime,v)$ cannot both satisfy the same order relation $\tau$.
Notice also that $\tau$ can be naturally extended to a function 
$\tau_*:S_\tau^{[\le k]}\to\NN^{[\le k]}$ 
defined on all the $r$-tuples of elements in $S_\tau$.
Given an order relation $\tau$, we let 
\begin{equation}\label{sigmatau}
\Sigma_\tau := \{ a_i\in S_\tau:\, i\le k-1,\, \tau(a_i)=a_j\ {\rm for\ some}\ 0\le j\le k-1 \}\subseteq S_\tau\,.
\end{equation}

Given $\C\subseteq \E_k$ we let $G_\C$ be the graph with set of vertices $\NN^{[k]}$, and 
such that $(v,v^\prime)$ is an edge of $G_\C$ if and only if $(v,v^\prime)$ satisfies $\tau$
for some $\tau\in\C$.   
Notice that the graph $G_\C$ is contractible for all $\C\subseteq \E_k$.

\begin{definition}
We say that a contractible graph $G$ is \emph{simple} if it is eventually equal to some graph $G_\tau$, i.e.
if $s_*^n(G)=G_\tau$, for some $\tau\in \E_k$ and $n\in\NN$.
\end{definition}

A relevant example of such graphs is the graph $G_k=(V_{G_k}, E_{G_k})$ with vertices $V_{G_k}=\NN^{[k]}$ 
and such that $(v,v^\prime)\in E_{G_k}$ iff $v^\prime_i=v_{i+1}$ for all $i\in \{0,\ldots,k-2\}$. 

\section{Probability thresholds}\label{sectrash}

In this section we give an answer to Problem \ref{problemv} under the assumption that $G$ is a simple contractible graph. 

\subsection{A variational problem in the unit cube}\label{secvar}

We first discuss a variational problem on the unit cube, which is related to Problem \ref{problemv}.
{}From now on, given a set $A\subseteq [0,1]^S$ where $S$ is a countable set, we set
$|A| := \mathcal L(A)$
where $\mathcal L$ is the product Lebesgue measure on $[0,1]^S$.

\begin{lemma}\label{lemvar}
Let $S$ be a set, let $S_0\subset S$ be a finite subset, 
and let $\tau:S_0\to S$, so that $\tau^*$ maps $[0,1]^S$ into $[0,1]^{S_0}$.
For all measurable functions $f:[0,1]^{S_0}\to [0,1]$ we let 
\begin{equation}\label{eqvar}
Z_\tau(f):= \left\{ x\in [0,1]^{S}:\ f\left( x|_{S_0}\right)>f \left( \tau^*(x)\right)\right\}\,.
\end{equation}
If $\tau$ has no cycles, we have
\begin{equation}\label{eqvarbis}
\sup_{f:[0,1]^{S_0}\to [0,1]}\vert Z_\tau(f)\vert  = 1-\frac{1}{w(\tau)}\,,
\end{equation}
where 
\[
w(\tau) := 1 + \max \{ n\in \NN: \exists\, s\in S_0\ {\rm s.\,t.\,}
\tau^j(s)\in S_0\ \forall\ 1\le j\le n-1\}
\ge 2.
\]
\end{lemma}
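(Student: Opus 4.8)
The plan is to reduce the computation of $\sup_f |Z_\tau(f)|$ to a one-dimensional problem governed purely by the combinatorics of the map $\tau$. The key observation is that $w(\tau)$ measures the longest $\tau$-orbit that stays inside $S_0$: if $s\in S_0$ is such that $s,\tau(s),\dots,\tau^{w(\tau)-2}(s)$ all lie in $S_0$ (and then $\tau^{w(\tau)-1}(s)$ may leave $S_0$), then applying the defining inequality of $Z_\tau(f)$ along this orbit forces a chain of strict inequalities on the values of $f$. More precisely, suppose $x\in[0,1]^S$ is such that $\sigma_*^{\,j}(x)\in Z_\tau(f)$ for all $j=0,\dots,w(\tau)-2$, where I abuse notation to mean the point whose $S_0$-restriction is $(\tau^*)^j$-shifted; then $f$ evaluated along the orbit is strictly decreasing, which can happen for at most $w(\tau)-1$ consecutive indices before a value must repeat or the chain must break. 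This is the mechanism behind the bound $1-1/w(\tau)$.

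For the upper bound, I would argue as follows. Let $n=w(\tau)$ and pick a witness $s\in S_0$ with $\tau^j(s)\in S_0$ for $1\le j\le n-1$. Consider the $n-1$ ``translates'' of the event $Z_\tau(f)$ obtained by composing with powers of the partial shift along this orbit; on the coordinates $s,\tau(s),\dots,\tau^{n-1}(s)$ these events cannot all occur simultaneously, since that would give a strictly decreasing sequence $f(\cdot)>f(\cdot)>\cdots$ of length $n$ contradicting the fact that a strict chain in $[0,1]$ obtained by $n-1$ strict drops requires $n$ distinct values but the inequality $f(x|_{S_0})>f(\tau^*x)$ applied cyclically (using that the orbit eventually exits and the exit coordinate is free) cannot be satisfied $n-1$ times. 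By the union bound / a Fubini-type averaging over the free coordinates, the measure of the ``bad'' complement is at least $1/n$, hence $|Z_\tau(f)|\le 1-1/n$. The technical point is to set this up carefully as an integral over the fibers of the projection that forgets the orbit coordinates, so that on each fiber we have a genuine finite problem about real numbers.

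For the lower bound, I would construct a nearly optimal $f$ explicitly. The natural candidate is a function depending only on the coordinate $x_s$ associated to the witness $s$, namely a ``staircase'' that partitions $[0,1]$ into $n$ subintervals $I_0,\dots,I_{n-1}$ of equal length $1/n$ and has $f$ constant equal to $k/n$ on the coordinate blocks in a way that makes $f(x|_{S_0})>f(\tau^*x)$ hold precisely when a certain pair of coordinates falls into consecutive (decreasing) intervals; summing the measures of the $n-1$ favorable configurations gives $(n-1)/n = 1-1/n$. One should check that $\tau$ having no cycles guarantees $\tau^*$ is well-defined on the relevant coordinates and that the orbit coordinates $x_s,x_{\tau(s)},\dots$ are genuinely independent, so the product measure computation goes through. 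One can also take a limit over finer staircases if an exact optimizer does not exist, yielding the supremum rather than a maximum.

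The main obstacle, I expect, is the bookkeeping in the upper bound: translating the abstract statement ``the inequality cannot hold $n-1$ times along the orbit'' into a clean measure estimate on $[0,1]^S$. One has to be careful that the various shifted copies of $Z_\tau(f)$ are measured with respect to the same product measure and that the coordinates being permuted or dropped are handled via the right projection/disintegration; the no-cycles hypothesis on $\tau$ is exactly what makes the partial-shift maps measure-preserving in the needed sense. Once this is in place, both bounds match and give $1-1/w(\tau)$, and the inequality $w(\tau)\ge 2$ is immediate since the trivial orbit of length $1$ (just $s\in S_0$ itself, with $n=1$) is always available.
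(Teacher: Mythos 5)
Both halves of your argument have genuine gaps.

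For the upper bound, the claimed contradiction is not one: there is nothing impossible about a strictly decreasing sequence $f(\cdot)>f(\cdot)>\cdots$ of length $n$ in $[0,1]$, so the shifted copies of $Z_\tau(f)$ along the (non-closed) orbit can perfectly well intersect. The impossibility only appears when the chain of inequalities is made to \emph{wrap around}, i.e.\ when the orbit is closed into a cycle so that the telescoping ends with $f$ evaluated at the same argument it started from. The paper achieves this by extending $\tau$ to a $w(\tau)$-periodic bijection $\tilde\tau$ of all of $S$ (possible exactly because $\tau$ has no cycles, so every $\tau$-chain inside $S_0$ has at most $w(\tau)-1$ points and can be completed to a $w(\tau)$-cycle); then $\tilde\tau^*$ preserves the product measure, the $w(\tau)$ sets $(\tilde\tau^*)^j(Z_\tau)$ all have measure $|Z_\tau|$ and empty intersection, and the union bound gives $|Z_\tau|\le 1-1/w(\tau)$. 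Note also that cyclically permuting only the orbit coordinates on each fiber, as you suggest, does not repair this: since $f$ depends on all of $S_0$ and $\tau^*(x)$ on all of $\tau(S_0)$, the shifted events do not telescope unless \emph{every} coordinate of $S$ is moved coherently, which is precisely what the global periodic extension provides.

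For the lower bound, a function depending only on the single coordinate $x_s$ cannot work when $w(\tau)\ge 3$: writing $U=x_s$ and $V=x_{\tau(s)}$, which are i.i.d.\ uniform, the event $Z_\tau(f)$ becomes $\{g(U)>g(V)\}$ for some $g:[0,1]\to[0,1]$, and by symmetry $|\{g(U)>g(V)\}|\le 1/2<1-1/w(\tau)$. (Your arithmetic also does not close: each ``consecutive interval'' configuration of a pair of coordinates has measure $1/n^2$, so the $n-1$ of them sum to $(n-1)/n^2$, not $(n-1)/n$.) The optimizer must see the whole window: the paper takes $f_\eps(x)$ proportional to $\max_{0\le j\le w-2}x_{\tau^j(\bar s)}+\eps\bar\jmath$, with $\bar\jmath$ the argmax, so that up to an $O(\eps)$ error the event $Z_\tau(f_\eps)$ is ``the maximum of the $w$ i.i.d.\ variables $x_{\bar s},x_{\tau(\bar s)},\dots,x_{\tau^{w-1}(\bar s)}$ is not attained at the last one'', which has probability exactly $1-1/w$.
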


\begin{proof}
Assume first that $\tau$ can be extended to a $p$-periodic function $\tilde\tau$ on the whole of $S$, for some $p\in\NN$.
Let $f:[0,1]^{S_0}\to [0,1]$ and notice that 
\[
\bigcap_{j=0}^{p-1}(\tilde\tau^*)^j(Z_\tau(f)) = \emptyset.
\]
It then follows
\[
p \left(1-|Z_\tau(f)|\right) = p |Z_\tau(f)^c| \ge  
\left|\bigcup_{j=0}^{p-1}\left((\tilde\tau^{*})^j(Z_\tau(f))\right)^c\right| = 
1- \left|\bigcap_{j=0}^{p-1}({\tilde\tau^*})^j(Z_\tau(f))\right| =1,
\]
which implies 
\begin{equation}\label{eqeqeq}
\vert Z_\tau(f)\vert  \le 1-\frac{1}{p}\,.
\end{equation}
Observe that, by definition of $w(\tau)$, 
we can find a $w(\tau)$-periodic function $\tilde\tau:S\to S$ such that 
$\tilde\tau|_{S_0}=\tau|_{S_0}$. From \eqref{eqeqeq} it then follows
\[
\vert Z_\tau(f)\vert  \le 1-\frac{1}{w(\tau)}\,.
\]

\smallskip 

We now prove the opposite inequality. By definition of $w(\tau)$ there exists 
$\overline s\in S_0$ such that $\tau^j(\overline s)\in S_0$ for all $1\le j\le w(\tau)-2$. 
Let us consider the function 
\[
f_\eps(x)= \frac{1}{1+\eps w(\tau)}\left(\, \max_{0\le j\le w(\tau)-2}x_{\tau^j(\overline s)} + \eps\bar j\right),
\]
where $1/(1+\eps w(\tau))$ is just a normalization factor in order to have $f_\eps(x)\in [0,1]$, and
$\bar j=\bar j(x)\in \NN$ is such that $x_{\tau^{\bar j}(\overline s)}=\max^{}_{0\le j\le w(\tau)-2}x_{\tau^j(\overline s)}$.
Notice that $\bar j(x)$ is well-defined out of a set of zero measure. 
We then have
\[
\vert Z_\tau(f_\eps)\vert \ge \lim_{\eps\to 0}
\vert Z_\tau(f_\eps)\vert  = 1-\frac{1}{w(\tau)}
\]
which gives \eqref{eqvarbis}.
\end{proof}

Notice that $w(\tau)$ depends only only on $(\tau,S_0)$ and is independent of $S$, so that for instance 
one can substitute $S$ with the finite set $S_0\cup \tau(S_0)$.

\bexa\label{exshift} 
In the particular case where $S=\NN$, $S_0=\{ 1,\ldots,k\}$ 
and $\tau$ is the shift map restricted to $S_0$, i.e. $\tau(i)= i+1$ 
we have $w(\tau)=k+1$, so that 
\[
\sup_{f:[0,1]^{S_0}\to I}\left\vert Z_\tau(f)\right\vert = 1-\frac{1}{k+1}\,.
\]
\eexa

\subsection{A canonical probability space}\label{secphi}

Following \cite{BMN:12}, we reformulate Problem \ref{problemv} as a variational problem on a suitable space of sequences.
For all $\omega\in \Om$, we consider the oriented graph $X(\omega)<G$ whose vertices 
are all $v\in V_G$ such that $\omega\in X_{v}$.  
Given $v\in \NN^{[k]}$, we also let $Y_v\subseteq X_v$ be the subset of all $\omega$ such that 
$X(\omega)$ contains an infinite path starting from $v$, i.e. there exists an infinite 
sequence $\{v_k\}_{k\in\NN}$, with $v_1=v$ and $\omega\in \bigcap_k X_{v_k}$. 

We recall that a partially ordered set admits a decreasing function into the first uncountable ordinal $\omega_1$ 
(called height or rank function) if and only if it has no infinite increasing sequences,
we can define a measurable map $\phi: \Omega\times\NN^{[k]}\to \omega_1+1$ by setting 
\[
\phi(\omega,v) = \left\{ \begin{array}{ll}
\displaystyle\sup_{\{v'\in \NN^{[k]}:\, (v,v')\in E_G,\,\omega\in X_{v'}\}} \phi(\omega,v')+1
& {\rm if\ }\omega\in X_v\setminus Y_v,
\\
0 & {\rm if\ }\omega\not\in X_v
\\
\omega_1
& {\rm if\ }\omega\in Y_v.
\end{array}\right. 
\]
For simplicity of notation, we still denote by $\phi$ the map $\phi: \Om\to (\omega_1+1)^{\NN^{[k]}}$ 
defined as $\phi(\omega)_v = \phi(\omega,v)$.
 
\br  
We have $\phi(\omega,v) = \phi_{\omega^{}_1}(\omega,v)$ where $\phi_\alpha \colon \Om \to (\omega_1+1)^{\NN^{[k]}}$ is   
the truncation $\phi_\alpha:=\min (\phi, \alpha)$, that we can equivalently define by induction on $\alpha \leq \omega_1$ as  
\[
\begin{array}{lll} \label{defrec} 
\phi_0(\omega,v) &=& 0 \\
\phi_\alpha(\omega,v) &=& \sup \,\{ \phi_{\beta}(\omega,v')+1 :\, \beta<\alpha,\, (v,v')\in E_G,\,\omega\in X_{v'}\}.
\end{array}
\]
\er

Notice that $\phi(\omega,v)<\omega_1$ iff there is no infinite path 
in $X(\omega)$ starting from $v$, that is $\omega\not\in Y_v$, and in this case $\phi(\omega,v)$ 
is precisely the height of the vertex $v$ in $X(\omega)$. 
Indeed, if $\omega\in Y_v$ then 
$\phi(\omega,v)=\omega_1$ by definition. On the other hand, if $\phi(\omega,v)=\omega_1$ then there exists $v'$ such that 
$(v,v')\in E_G$, $\omega\in X_{v'}$ and $\phi(\omega,v')=\omega_1$, since otherwise $\phi(\omega,v)$ would also be less than $\omega_1$,
being the supremum of a countable set of countable ordinals. By iteration we then get $\omega\in Y_v$.
In particular, if $X$ has no infinite paths, then the function 
$\phi$ takes value in $\omega_1^{\NN^{[k]}}$ and, if there are no paths of length $p$, 
then it takes values in $p^{\NN^{[k]}}$. 


\bl\label{lemnev}
The following assertions hold:
\begin{enumerate}
\item the set $P:= \{\omega \in \Om :\, X(\omega) \text{ has an infinite path } \}$ is $\mu$-measurable; 
\item for all $\alpha \leq \omega_1$ and $v\in V_G$, the set $\{\omega \in \Om  :\, \phi(\omega,v) = \alpha\}$ is $\mu$-measurable; 
\item $\phi\colon \Omega \to {(\omega_1+1)}^{V_G}$ is $\mu$-measurable and its  restriction to $\Omega\setminus P$ is essentially bounded. 
\end{enumerate} 
\el

\bp
Since taking the supremum over a countable set preserves measurability, 
it follows that the sets $\{\omega \in \Om : \phi(\omega,v)=\alpha\}$ are measurable
for all $v\in V_G$ and $\alpha < \omega_1$. We will show that 
$\{\omega \in \Om : \phi(\omega,v)=\omega_1\}$ is $\mu$-measurable, namely it is the union of a measurable set 
and a $\mu$-null set. Fix $v\in V_G$. The sequence of values
$\mu\left(\left\{ \omega \in \Om : \phi(\omega,v)\le\beta\right\}\right)$ is increasing with respect to the 
countable ordinal $\beta$ and uniformly bounded by $1=\mu(\Omega),$ therefore it is stationary at some finite value. 
So there is $\alpha_0<\omega_1$ such that 
\[
\mu\left(\left\{ \omega \in \Om :\ \phi(\omega,v)=\beta\right\}\right)=0 
\qquad \mbox{ for } \alpha_0\le\beta<\omega_1\,. 
\]
It follows that $\{\omega \in \Om : \phi(\omega,v)=\omega_1\}$ is $\mu$-measurable and $\phi$ is $\mu$-measurable. 
Since $P = \cup_v \{\omega \in \Om : \phi(\omega,v) = \omega_1\}$, we have that $P$ is $\mu$-measurable, too. 
\ep

As a consequence, if $X$ has no infinite paths, then 
the function $\phi$ maps $\Om$ into the Cantor space 
$\alpha^{\NN^{[k]}}$ for some $\alpha<\omega_1$ (up to a set of zero measure),
so that it induces a probability measure $m=\phi_*(\mu)$ on $\omega_1^{\NN^{[k]}}$ concentrated on $\alpha^{\NN^{[k]}}$,
i.e. $m(\alpha^{\NN^{[k]}})=1$. Moreover, for all $v\in V_G$ we have
\begin{equation}\label{av}
\phi(X_v)\subseteq 
A_v:=\left\{ x\in \omega_1^{\NN^{[k]}}:\, x_v>x_{v'}\, {\rm for\ all}\, v'\, {\rm such\, that}\,(v,v')\in E_G\right\}.
\end{equation}
It then follows 
\begin{equation}\label{soglia}
\lambda_G = \sup_{m\in \mathcal M_{\rm c}(\omega_1^{\NN^{[k]}})}\inf_{v\in \NN^{[k]}}m(A_{v}).
\end{equation}

\subsection{Infinite paths.} We are ready to state the main result of this paper.

\bt \label{teoromitov}
Let $G$ be a simple contractible graph with $V_G=\NN^{[k]}$, i.e. 
$G=G_\tau$ for some $\tau\in\E_k$, up to a finite number of shifts. 
Then
\[
\lambda_G = 
1 - \frac{1}{w\left(\tau|_{\Sigma_\tau}\right)}\,.
\]
In particular, letting $X: \Om\to 2^{V_G}$, 
the random graph $X$ has an infinite path if
\begin{equation}\label{hhv}
\lambda:=
\inf_{v\in V_G}\mu(X_{v})> \lambda_G\,.
\end{equation}
On the contrary, if $\lambda<\lambda_G$ we can find $X$ such that $X(\omega)$ has no infinite paths 
for some $\omega\in \Om$.
\et

\bp
We divide the proof into several steps.

\smallskip

{\it Step 1.} Recalling \eqref{soglia}, we want to compute the supremum
\[
\sup_{m\in \mathcal M_{\rm c}(\omega_1^{\NN^{[k]}})}\inf_{v\in \NN^{[k]}}m(A_{v}).
\]
Since the support of $m$ is contained in $\alpha_0^{\NN^{[k]}}$, for some compact countable ordinal $\alpha_0$,
thanks to Proposition \ref{propasc} we can assume that $m$ is asymptotically contractible,
so that in particular the sequence $m_k=(s^*)^k(m)$ converges to a contractible measure $m^\prime\in \mathcal M_1(\alpha_0^{\NN^{[k]}})$ 
in the weak$^*$ topology of $\mathcal M_1(\alpha_0^{\NN^{[k]}})$. Moreover, 
one can prove by ordinal induction as in \cite[Lemma 4.4]{BMN:12} that for all $\alpha<\omega_1$ there holds 
\begin{equation*}
\inf_{v\in \NN^{[k]}}m\left( \left\{ x\in A_v: x_v\le \alpha\right\}\right)\le 
\inf_{v\in \NN^{[k]}}m^\prime\left( \left\{ x\in A_v: x_v\le\alpha\right\}\right).
\end{equation*}
Therefore, we can assume that the measure $m$ in \eqref{soglia} is contractible.

\smallskip

{\it Step 2.} Recalling the definition of the set $A_v$ in \eqref{av} and
letting $S=S_\tau$ be the domain of $\tau$,
by Theorem \ref{teoah} we have 
\begin{eqnarray}\label{estimo}
&& \!\!\!\sup_{m\in \mathcal M_{\rm c}(\omega_1^{\NN^{[k]}})}\inf_{v\in \NN^{[k]}}m(A_{v}) = 
\sup_{\underset{m\,\rm contractible}{m\in \mathcal M_{\rm c}(\omega_1^{\NN^{[k]}})}}\ \inf_{v\in \NN^{[k]}}m\left(  A_v\right)
= \sup_{\underset{m\,\rm contractible}{m\in \mathcal M_{\rm c}(\omega_1^{\NN^{[k]}})}}\ m\left(  A_S\right) 
\\ \nonumber
&& = \sup_{\underset{\alpha_0<\omega_1}{f:[0,1]^{S^{[\le k]}}\!\!\to \alpha_0}}
\left\vert 
\left\{ x\in [0,1]^{\NN^{[\le k]}}: f(x|_{S^{[\le k]}})>
f(\tau^*\sigma^*(x))
\ \forall\sigma\in\Ic \,{\rm s.\,t.\,} \sigma|_S={\rm id}\right\}\right\vert
\\ \nonumber
&& \le \sup_{f:[0,1]^{S^{[\le k]}}\!\!\to [0,1]}
\!\!\left\vert 
\left\{ x\in [0,1]^{\NN^{[\le k]}}: f(x|_{S^{[\le k]}})>
f(\tau^*\sigma^*(x))
\ \forall\sigma\in\Ic \,{\rm s.\,t.\,} \sigma|_S={\rm id}\right\}\right\vert
\end{eqnarray}
where the last inequality follows since every countable ordinal $\alpha_0$ can be embedded in $[0,1]$ 
with the induced ordering. We will show below that this inequality is indeed an equality.

\smallskip

{\it Step 3.} Recalling the definition of $\Sigma_\tau$ in \eqref{sigmatau}, we
observe that every $x\in [0,1]^{\NN^{[\le k]}}$ can be uniquely written as $x=(y,z)$, 
with $y=x|_{S^{[\le k]}}$ and $z=x|_{\NN^{[\le k]}\setminus S^{[\le k]}}$. Similarly,
we sometimes write $y\in [0,1]^{S^{[\le k]}}$ as $y=(y',t)$,
with $y'=y|_{\Sigma_\tau^{[\le k]}}$ and $z=x|_{S^{[\le k]}\setminus \Sigma_\tau^{[\le k]}}$.
For simplicity of notation, we let in the following $\tau_0:\Sigma_\tau\to S$ and 
$\tau_1:S\setminus\Sigma_\tau\to \NN\setminus S$ be the restrictions of $\tau$ 
to $\Sigma_\tau$ and $S\setminus\Sigma_\tau$, respectively. We now show that 
\begin{eqnarray}\label{lety}
&&\left\vert 
\left\{ x\in [0,1]^{\NN^{[\le k]}}: f(x|_{S^{[\le k]}})>
f(\tau^*\sigma^*(x))
\ \forall\sigma\in\Ic \,{\rm s.\,t.\,} \sigma|_S={\rm id}\right\}\right\vert
\\ \nonumber 
&&=\left\vert \left\{ y\in [0,1]^{S^{[\le k]}}:\, f(y)>f\left(\tau_0^*(y),t\right)\ \forae\, 
t\in [0,1]^{S^{[\le k]}\setminus \Sigma_\tau^{[\le k]}}
\right\}\right\vert
\end{eqnarray}
where $\tau|_{\Sigma_\tau}:\Sigma_\tau\to S$, so that $\tau_0^*: [0,1]^{S^{[\le k]}}\to [0,1]^{\Sigma_\tau^{[\le k]}}$.
Indeed, for all $y\in S^{[\le k]}$ we set
\[
J(y) = \left\{ t\in [0,1]^{S^{[\le k]}\setminus \Sigma_\tau^{[\le k]}}:\,
f(y)>f\left(\tau_0^*(y),t\right)
\right\}\,.
\]
Then, letting
\[
\si = \left\{ \sigma\in\Ic :\,\sigma|_S={\rm id}\right\}\,,
\]
by Fubini-Tonelli Theorem we have
\begin{eqnarray}\label{nonni}
\nonumber
&&\left\vert 
\left\{ x\in [0,1]^{\NN^{[\le k]}}: f(x|_{S^{[\le k]}})>
f(\tau^*\sigma^*(x))
\ \forall\sigma\in\si\right\}\right\vert 
\\ \nonumber
&&= \left\vert 
\left\{ x=(y,z)\in [0,1]^{\NN^{[\le k]}}: f(y)>
f\left(\tau_0^*(y),\tau_1^*\sigma_1^*(z)\right)\right.\right.
\forall\sigma\in\si\Big\}\Big\vert
\\ \nonumber
&&= \int_{[0,1]^{S^{[\le k]}}} \left\vert \left\{ z\in [0,1]^{\NN^{[\le k]}\setminus S^{[\le k]}}:
f(y)>f\left(\tau_0^*(y),\tau_1^*\sigma_1^*(z)\right) \forall\sigma\in\si
\right\}\right\vert\,dy
\\
&&= \int_{[0,1]^{S^{[\le k]}}} \left\vert \left\{ z\in [0,1]^{\NN^{[\le k]}\setminus S^{[\le k]}}:
\tau_1^*\sigma_1^*(z)\in J(y)\, \forall\sigma\in\si\right\}\right\vert\,dy
\end{eqnarray}
where we denoted by $\sigma_1:\NN\setminus S\to \NN\setminus S$ the restriction of $\sigma$ to $\NN\setminus S$.
Notice that the set
\[
\left\{ z\in [0,1]^{\NN^{[\le k]}\setminus S^{[\le k]}}:
\tau_1^*\sigma_1^*(z)\in J(y)\, \forall\sigma\in\si\right\},
\]
appearing in the last term of \eqref{nonni}, is contained in an infinite product of the set $J(y)$,
therefore its measure is nonzero if and only if $|J(y)|=1$. From \eqref{nonni} it then follows
\begin{eqnarray*}
&&\left\vert 
\left\{ x\in [0,1]^{\NN^{[\le k]}}: f(x|_{S^{[\le k]}})>
f(\tau^*\sigma^*(x))
\ \forall\sigma\in\si\right\}\right\vert 
\\
&&= \int_{[0,1]^{S^{[\le k]}}}\big\lfloor\vert J(y)\vert\big\rfloor\,dy
\\
&&= \left\vert \left\{ y\in [0,1]^{S^{[\le k]}}:\, \vert J(y)\vert=1\right\}\right\vert
\\
&&= \left\vert \left\{ y\in [0,1]^{S^{[\le k]}}:\, f(y)>f(\tau_0^*(y),t)\ \forae\, 
t\in [0,1]^{S^{[\le k]}\setminus \Sigma_\tau^{[\le k]}}
\right\}\right\vert
\end{eqnarray*}
which proves \eqref{lety}.

\smallskip

{\it Step 4.} Given $f:[0,1]^{S^{[\le k]}}\to [0,1]$ and $\eps>0$, we define
$g_{f,\eps}: \Sigma_\tau^{[\le k]}\to [0,1]$ as
$$
g_{f,\eps}(y') = \sup \left\{\lambda\in [0,1]:\,
\left\vert\{t\in[0,1]^{S^{[\le k]}\setminus \Sigma_\tau^{[\le k]}}: f(y',t)>\lambda\}\right\vert\ge \eps\right\}.
$$
For all $y\in [0,1]^{S^{[\le k]}}$ we also let
$f_\eps(y) = \min\big( f(y),\ g_{f,\eps}(\tau_0^*(y))\big)$.
Notice that from the definition of $g_{f,\eps}$ it follows that the set 
\[
A_\eps = \left\{ y\in [0,1]^{S^{[\le k]}}:\, f(y)>g_{f,\eps}(\tau_0^*(y))\right\}
\]
has measure $|A_\eps|\le \eps$, moreover $f=f_\eps$ on $[0,1]^{S^{[\le k]}}\setminus A_\eps$.
Notice also that the function $t\to f(\tau_0^*(y),t)$
has essential supremum $g_{f,\eps}(\tau_0^*(y))$,
and attains such value on a set of measure bounded below by $\eps$.
We then get
\begin{eqnarray}\label{eqine}
\nonumber 
&&\left\vert \left\{ y\in [0,1]^{S^{[\le k]}}:\, f(y)>f(\tau_0^*(y),t)\ \forae\, 
t\in [0,1]^{S^{[\le k]}\setminus \Sigma_\tau^{[\le k]}}
\right\}\right\vert 
\\ \nonumber
&& \le
\ \left\vert \left\{ y\in [0,1]^{S^{[\le k]}}\setminus A_\eps:\, f_\eps(y)>f_\eps(\tau_0^*(y),t)\ \forae\, 
t\in [0,1]^{S^{[\le k]}\setminus \Sigma_\tau^{[\le k]}}
\right\}\right\vert + \eps
\\
&& \le
\ \left\vert \left\{ y\in [0,1]^{S^{[\le k]}}: \, f_\eps(y)>f_\eps(\tau_0^*(y),t)\ \forae\, 
t\in [0,1]^{S^{[\le k]}\setminus \Sigma_\tau^{[\le k]}}
\right\}\right\vert +\eps
\\ \nonumber
&& \le 
\ \Big\vert \Big\{ y\in [0,1]^{S^{[\le k]}}:\, f_\eps(y)>
\,{\rm ess}\!\!\!\!\!\!\!\!\!\!\!\!\!\!\sup_{t\in [0,1]^{S^{[\le k]}\setminus \Sigma_\tau^{[\le k]}}} f_\eps(\tau_0^*(y),t)
\Big\}\Big\vert +\eps
\\ \nonumber
&& \le 
\ \left\vert \left\{ y\in [0,1]^{S^{[\le k]}}:\, g_{f,\eps}(y|_{\Sigma_\tau^{[\le k]}})>g_{f,\eps}(\tau_0^*(y))\right\}\right\vert + \eps\,.
\end{eqnarray}
Notice that \eqref{eqine} implies 
\begin{eqnarray}\label{lupy}
\nonumber
&& \sup_{f:[0,1]^{S^{[\le k]}}\to [0,1]} 
\left\vert \left\{ y\in [0,1]^{S^{[\le k]}}:\, f(y)>f(\tau_0^*(y),t)\ \forae\, 
t\in [0,1]^{S^{[\le k]}\setminus \Sigma_\tau^{[\le k]}}
\right\}\right\vert  
\\
&& = \sup_{g:[0,1]^{\Sigma_\tau^{[\le k]}}\to [0,1]}\left\vert \left\{ 
y\in [0,1]^{S^{[\le k]}}:\, g(y|_{\Sigma_\tau^{[\le k]}})>g(\tau_0^*(y))
\right\}\right\vert
\end{eqnarray}
as the $\ge$ inequality follows immediately by considering functions $f$ depending only 
on the variables in $[0,1]^{\Sigma_\tau^{[\le k]}}$.
Putting together \eqref{lety} and \eqref{lupy} we then get
\begin{eqnarray}\label{lory}
\nonumber 
&&\sup_{f:[0,1]^{S^{[\le k]}}\to [0,1]}
\left\vert 
\left\{ x\in [0,1]^{\NN^{[\le k]}}: f(x|_{S^{[\le k]}})>
f(\tau^*\sigma^*(x))
\ \forall\sigma\in\si\right\}\right\vert
\\ 
&& \sup_{f:[0,1]^{S^{[\le k]}}\to [0,1]} 
\left\vert \left\{ y\in [0,1]^{S^{[\le k]}}:\, f(y)>f(\tau_0^*(y),t)\ \forae\, 
t\in [0,1]^{S^{[\le k]}\setminus \Sigma_\tau^{[\le k]}}
\right\}\right\vert  
\\ \nonumber 
&& = \sup_{g:[0,1]^{\Sigma_\tau^{[\le k]}}\to [0,1]}
\left\vert \left\{ 
y\in [0,1]^{S^{[\le k]}}:\, g(y|_{\Sigma_\tau^{[\le k]}})>g(\tau_0^*(y))
\right\}\right\vert.
\end{eqnarray}

\smallskip

{\it Step 5.} Notice that, if $g_n\to g$ almost everywhere on $[0,1]^{\Sigma_\tau^{[\le k]}}$ as $n\to +\infty$, then 
\begin{eqnarray*}
&& \liminf_{n\to +\infty}\ 
\left\vert \left\{ 
y\in [0,1]^{S^{[\le k]}}:\, g_{n}(y|_{\Sigma_\tau^{[\le k]}})>g_{n}(\tau_0^*(y))
\right\}\right\vert
\\
&& \qquad \ge 
\left\vert \left\{ 
y\in [0,1]^{S^{[\le k]}}:\, g(y|_{\Sigma_\tau^{[\le k]}})>g(\tau_0^*(y))
\right\}\right\vert.
\end{eqnarray*}
This implies that the supremum in \eqref{lory} can be taken over a dense class of $g$'s,
or equivalently of $f$'s, so that in particular the inequality in \eqref{estimo} is indeed an equality. 

{}From \eqref{estimo} with the equality and from \eqref{lory} we then have
\[
\sup_{m\in \mathcal M_{\rm c}(\omega_1^{\NN^{[k]}})}\inf_{v\in \NN^{[k]}}m(A_{v}) 
= \sup_{g:[0,1]^{\Sigma_\tau^{[\le k]}}\to [0,1]}\left\vert \left\{ 
y\in [0,1]^{S^{[\le k]}}:\, g(y|_{\Sigma_\tau^{[\le k]}})>g(\tau_0^*(y))\right\}\right\vert
\]
and by Lemma \ref{lemvar} we finally get
\[
\sup_{m\in \mathcal M_{\rm c}(\omega_1^{\NN^{[k]}})}\inf_{v\in \NN^{[k]}}m(A_{v}) = 
1 - \frac{1}{w\left(\tau_0\right)}
= 1 - \frac{1}{w\left(\tau|_{\Sigma_\tau}\right)} = \lambda_G\,.
\]

\smallskip

{\it Step 6.} To prove that last assertion it is enough to 
fix $\eps>0$ and consider a contractible measure $m\in \mathcal M_{\rm c}(\omega_1^{\NN^{[k]}})$
such that 
\[
\inf_{v\in \NN^{[k]}}m\left(  A_v\right) \ge \lambda_G -\eps\,.
\]
The conclusion then follows by choosing $\Om=\omega_1^{\NN^{[k]}}$ as probability space, equipped with the measure $m$, 
and letting $X_{v}=A_{v}$. In this way, we define a random graph $X$ with no infinite paths
and such that $m(A_{v})\ge \lambda_G -\eps$ for all $v\in\NN^{[k]}$.
\ep

When $G$ is only contractible, reasoning as above we get the following weaker version of Theorem \ref{teoromitov}.
We recall that, given $\C\subset\E_k$, we may assume that all the maps $\tau\in\C$ are defined on the same domain $S=S_\tau$.
\bprop \label{teoromitogen}
Let $G=G_\C$ be a contractible graph with $V_G=\NN^{[k]}$ and $\C\subset\E_k$. Then 
\[
\lambda_G = \sup_{f:[0,1]^{S^{[\le k]}}\to [0,1]}
\left\vert \left\{ x\in [0,1]^{\NN^{[\le k]}}:\, 
f(x|_{S^{[\le k]}})>f(\tau^*\sigma^*(x))
\ \forall (\sigma,\tau)\in\Ic\times\C\right\}\right\vert.
\]
%
In particular, letting $X: \Om\to 2^{V_G}$, 
the random graph $X$ has an infinite path if
\begin{equation*}
\lambda:=
\inf_{v\in V_G}\mu(X_{v})> \lambda_G\,.
\end{equation*}
On the contrary, if $\lambda<\lambda_G$ we can find $X$ such that $X(\omega)$ has no infinite paths 
for some $\omega\in \Om$.
\eprop

The next result provides a sharp lower bound on the probability of having an infinite path.

\bc\label{corperv}
Assume that the sets $X_{v}$ are such that $\inf_{v\in V_G} \mu(X_{v})=\lambda\ge \lambda_G$.
Let $P_\lambda$ be the set of all $\omega\in \Om$ such that $X(\omega)$ contains an infinite path. 
Then 
\[
\mu(P_\lambda) \ge \frac{\lambda -\lambda_G}{1-\lambda_G}.
\]
\ec

\bp 
Consider the conditional probability $\mu(\cdot \mid \Om \setminus P_\lambda) \in \mathcal M_1(\Om)$. 
For all $v\in V_G$ we have 
\begin{eqnarray}
\mu(X_v \mid \Om\setminus P_\lambda) & \geq & \frac {\mu(X_v) - \mu(P_\lambda)} {1 - \mu(P_\lambda)} \\
\nonumber & \geq & \frac {\lambda - \mu(P_\lambda)} {1 - \mu(P_\lambda)}\,.
\end{eqnarray}
Applying Theorem \ref{teoromitov} or Proposition \ref{teoromitogen} to $\mu(\cdot \mid \Om \setminus P_\lambda)$ it follows that 
$\frac {\lambda - \mu(P_\lambda)} {1 - \mu(P_\lambda)} \leq \lambda_G$, or equivalently $\mu(P_\lambda) \geq \frac {\lambda - \lambda_G} {1 - \lambda_G}$. 
\ep

\begin{remark}\label{remshift}\rm
In the particular case of the graph $G_k$ defined in Section \ref{seccong},  
we have $S_\tau=\{ 1,\ldots, k\}$, $\tau = s|_{S_\tau}$ and $\Sigma_\tau=\{ 1,\ldots, k-1\}$ 
($\Sigma_\tau=\emptyset$ if $k=1$). From Lemma \ref{lemvar} 
and Example \ref{exshift} it then follows that the threshold $\lambda_{G_k}$ is given by
\[
\lambda_{G_k} = 1 - \frac{1}{k}.
\]
Moreover, by Theorem \ref{teoromitov}, $X<G_k$ contains an infinite path if $\lambda > \lambda_{G_k}$. 

It is not clear from this analysis what happens when $\lambda=\lambda_{G_k}$, even if we 
expect that there are still infinite paths (this is proved in \cite{FT:85} and \cite{BMN:12} when $k=2$). 
\end{remark}

\subsection{Paths of finite length}\label{secopen}

Given a simple contractible graph $G=G_\tau$ and $p\in \NN$, we can look for 
the threshold $\lambda_p$ such that the random graph $X$ contains a path of length $p$,
whenever $\lambda>\lambda_p$.  We can proceed exactly
as in Theorem \ref{teoromitov}, with the simplification that the space 
$\omega_1^{\NN^{[k]}}$ is replaced by $p^{\NN^{[k]}}$, and obtain the following characterization of 
the threshold $\lambda_p$:

\begin{equation*}
\lambda_p:=\sup\left\{\inf_{v\in V_G=\NN^{[k]}}\mu(X_{v}):\ X \ \textrm{random graph without paths of length $p$}\right\}.
\end{equation*}

\bprop\label{prolp}
Let $p,k\in\NN$ and let $\tau\in\E_k$. 
We have 
\begin{eqnarray*}
\lambda_p &=& \sup_{m\in \mathcal M_1(p^{\NN^{[k]}})}\inf_{v\in \NN^{[k]}}m(A_v)
\\
&=& \sup_{g:[0,1]^{\Sigma_\tau^{[\le k]}}\to p}
\left\vert\left\{ y\in [0,1]^{S^{[\le k]}}:
g(y|_{\Sigma_\tau^{[\le k]}})>g((\tau|_{\Sigma_\tau^{[\le k]}})^*(y))\right\}\right\vert .
\end{eqnarray*}
In particular, the random graph $X$ has a path of length $p$ if
\begin{equation}\label{hhhfff}
\inf_{v\in V_G}\mu(X_{v})> \lambda_p\,.
\end{equation}
On the other hand, if $\lambda<\lambda_p$ we can find $X$ and $\omega\in \Om$ such that $X(\omega)$ has no paths of length $p$,
and $\mu(X_{v})\ge \lambda$ for all $v\in V_G$.
\eprop

Notice that from Proposition \ref{prolp} it follows that 
$\lambda_p$ is an increasing function of $p$ and
$\lim_{p\to\infty} \lambda_p = \lambda_G$.

When $G=G_k$ we are able to explicitly compute the value of the threshold $\lambda_p$, which we denote by $\lambda_{p,k}$.

\bprop\label{profin}
Let $p,k\in\NN$. Then
\begin{equation*}
\left( 1 - \frac{1}{k-1}\left\lceil\frac{k-1}{p}\right\rceil\right)\left( 1-\frac{1}{k}\right) \le \lambda_{p,k} \le
\left( 1 - \frac{1}{k-1}\left\lfloor\frac{k-1}{p}\right\rfloor\right)\left( 1-\frac{1}{k}\right).
\end{equation*}
In particular, if $(k-1)$ is a multiple of $p$ we get
\begin{equation}\label{eqlpp}
\lambda_{p,k} = \left( 1 - \frac 1 p\right)\left( 1-\frac{1}{k}\right).
\end{equation}
\eprop

\bp
As in Example \ref{exshift} we have $S=\{ 1,\ldots, k\}$,  
$\tau=s|^{}_S$ where $s$ is the shift map, and $\Sigma_\tau=\{ 1,\ldots, k-1\}$ 
(we set $\Sigma=\emptyset$ if $k=1$). Let us consider the function $h:[0,1]^{k-1}\to p$ defined as
\[
h(y_1,\ldots,y_{k-1})=\bar j-1 \quad ({\rm mod}\,p), 
\]
where the index $\bar j$ is such that $x_{\bar j}=\max_{1\le i\le k-1}x_i$, as in the proof of Lemma \ref{lemvar}.
We then have 
\begin{eqnarray*}
\lambda_{p,k}&=& \sup_{g:[0,1]^{\Sigma_\tau^{[\le k-1]}}\to p}\, 
\left\vert\left( \left\{ y\in [0,1]^{S^{[\le k-1]}}: g(y|_{\Sigma_\tau^{[\le k-1]}})
>g((\tau|_{\Sigma_\tau})^*(y))\right\}\right)\right\vert 
\\
&=&\sup_{g:[0,1]^{k-1}\to p}
\left\vert\left\{ y\in [0,1]^{k}: g(y_1,\ldots,y_{k-1})>g(y_2,\ldots,y_{k})\right\}\right\vert
\\
&\ge& 
\left\vert\left\{ y\in [0,1]^{k}: h(y_1,\ldots,y_{k-1})>h(y_2,\ldots,y_{k})\right\}\right\vert
\\
&\ge&
1 - \frac{1}{k}\left( 1+\left\lceil\frac{k-1}{p}\right\rceil\right)= 
\left( 1 - \frac{1}{k-1}\left\lceil\frac{k-1}{p}\right\rceil\right)\left( 1-\frac{1}{k}\right).
\end{eqnarray*}
%
In order to prove the opposite inequality, for all functions $g:[0,1]^{\Sigma_\tau^{[\le k-1]}}\to p$ we let 
\[
A_g := \left\{ y\in [0,1]^{S^{[\le k-1]}}:\ g(y|_{\Sigma_\tau^{[\le k-1]}})>g((\tau|_{\Sigma_\tau})^*(y))\right\}.
\]
Let $\tilde\tau:\,S=\{ 1,\ldots,k\}\to S$ be the $k$-periodic function defined as 
$\tilde\tau(i)=i+1$ if $i< k$, and $\tilde\tau(k)=1$. We have
\[
k\left( 1-\vert A_g\vert\right) = 
k\vert A_g^c\vert \ge 
\int_{S^{[\le k-1]}} \sum_{j=0}^{k-1} \chi^{}_{\left(({\tilde{\tau}}^*)^j A_g\right)^c}(x)\,dx
\ge \left\lceil\frac{k}{p}\right\rceil = 1 + \left\lfloor \frac{k-1}{p}\right\rfloor
\]
where $\chi_A$ denotes the characteristic function of the set $A\subset S^{[\le k-1]}$.
It then follows
\[
\vert A_g\vert \le \left( 1 - \frac{1}{k-1}\left\lfloor\frac{k-1}{p}\right\rfloor\right)\left( 1-\frac{1}{k}\right).
\]
\ep
{}From Proposition \ref{profin}, for all $k$ odd we get
\[
\lambda_{2,k} = \frac{k-1}{2k}\,.
\]
In particular $\lambda_{2,5}=2/5$, thus confirming a conjecture made in \cite{TW:98}.
We point out that in \cite{TW:98} it is also shown that $\lambda_{2,6}>5/12$,
so that \eqref{eqlpp} cannot hold for all couples $(p,k)$.

\subsection{Graphs with random edges}\label{randomed}

In this final section we discuss a variant of Problem \ref{problemv} for graphs with random edges.
More precisely, given a graph $G$ with vertex set $\NN^{[k]}$, 
we associate to each edge $e\in E_G$ a measurable set $X_e\subseteq \Om$,
where $(\Om, \mu)$ is a given probability space. As before, the random graph
can be equivalently defined by means of a $\mu$-measurable function 
$X: \Om \to 2^{E_G}$, such that $X_e := 
\{\omega\in \Om:\, e \in X(\omega)\}$ for all $e\in E_G$. 
In this setting, Problem \ref{problemv} becomes:
\bproblem \label{problemvbis} 
For all $e\in E_G$ let $X_e$ be a measurable subset 
of $(\Om, \mu)$, with $\mu(X_{e})\ge \lambda\in [0,1]$.
We ask for which values of $\lambda$
there exists an infinite sequence of vertices $v_i$ of $G$ 
such that $(v_i,v_{i+1})\in E_G$ for all $i\in\NN$ 
and $\bigcap_{i\in \NN} X_{v_i,v_{i+1}}$ is non-empty.
\eproblem
As above, answering to this question amounts to computing the 
threshold
\[
\widetilde\lambda_G:=\sup\left\{\inf_{e\in E_G}\mu(X_{e}):\ X \ \textrm{random graph without infinite paths}\right\}.
\]

Reasoning as in Section \ref{sectrash}, we let 
$X_v:=\cup_{v': (v,v')\in E_G}X_{v,v'}$, $Y_v\subset X_v$
be the subset of all $\omega$ such that 
$X(\omega)$ contains an infinite path starting from $v$, and 
the map $\phi:\Om \to (\omega_1+1)^{\NN^{[k]}}$ be defined as 
\[
\phi(\omega)_v = \left\{ \begin{array}{ll}
\displaystyle\sup_{\{v'\in \NN^{[k]}:\, (v,v')\in E_G,\,\omega\in X_{v,v'}\}} \phi(\omega)_{v'}+1
& {\rm if\ } \omega\in X_v\setminus Y_v,
\\
0 & {\rm if\ } \omega\not\in X_v
\\
\omega_1
& {\rm if\ } \omega\in Y_v.
\end{array}\right. 
\]
As before, if $X$ does not contain infinite paths then $\phi: \Om \to \omega_1^{\NN^{[k]}}$ is essentially bounded 
and 
$$
\phi(X_{v,v'})\subset A_{v,v'} := 
\left\{ x\in \omega_1^{\NN^{[k]}}:\, x_v>x_{v'}\right\} \qquad {\rm for\ all}\ (v,v')\in E_G.
$$
We now state the analog of Theorem \ref{teoromitov} and Proposition \ref{teoromitogen} in this setting. 

\bt \label{teoromitoedge}
Letting $G=G_\C$ be a contractible graph with $V_G=\NN^{[k]}$ and $\C\subset\E_k$, 
we have 
\begin{eqnarray*}
\widetilde \lambda_G &=& 
\sup_{m\in \mathcal M_{\rm c}(\omega_1^{\NN^{[k]}})}\inf_{(v,v')\in E_G}m(A_{v,v'})
\\
&=& \sup_{f:[0,1]^{S^{[\le k]}}\to [0,1]}
\left\vert \left\{ x\in [0,1]^{\NN^{[\le k]}}:\, 
f(x|_{S^{[\le k]}})>f(\tau^*(x))
\ \forall \tau\in\C\right\}\right\vert.
\end{eqnarray*}
When $G=G_\tau$ is a simple graph, from Lemma \ref{lemvar} it also follows
\[
\widetilde\lambda_G = 1 -\frac{1}{w(\tau)}\,.
\]
In particular, 
the random graph $X$ has an infinite path if
\begin{equation*}
\lambda:=
\inf_{v\in V_G}\mu(X_{v})> \widetilde\lambda_G\,.
\end{equation*}
On the other hand, if $\lambda<\widetilde\lambda_G$ we can find $X$ and $\omega\in\Om$ such that $X(\omega)$ has no infinite paths
and $\mu(X_{e})\ge \lambda$ for all $e\in E_G$.
\et

We point out that the proof of Theorem \ref{teoromitoedge} is analogous to the proof of 
Theorem \ref{teoromitov}, and in fact it is slightly shorter, 
due to the fact that the sets $A_{v,v'}$ have a simpler definition than the corrsponding sets $A_v$.


\end{document}